  \def\mm{{\mathfrak m}}
  \def\NZQ{\mathbb}               
  \def\FF{{\NZQ F}}
  \def\GG{{\NZQ G}}
  \def\frk{\mathfrak}               
  \def\mm{{\frk m}}
  \def\nn{{\frk n}}
  \def\Phi{{\frk N}}
  \def\xb{{\mathbf x}}
  \def\opn#1#2{\def#1{\operatorname{#2}}} 
  \opn\chara{char} \opn\length{\ell} \opn\pd{pd} \opn\rk{rk}
  \opn\projdim{proj\,dim} \opn\injdim{inj\,dim} \opn\rank{rank}
  \opn\depth{depth} \opn\grade{grade} \opn\height{height}
  \opn\embdim{emb\,dim} \opn\codim{codim}
  \opn\Tr{Tr} \opn\bigrank{big\,rank}
  \opn\superheight{superheight}\opn\lcm{lcm}
  \opn\trdeg{tr\,deg}
  \opn\reg{reg} \opn\lreg{lreg} \opn\ini{in} \opn\lpd{lpd}
  \opn\size{size}\opn{\mult}{mult}
  \opn\div{div} \opn\Div{Div} \opn\cl{cl} \opn\Cl{Cl}
  \opn\Spec{Spec} \opn\Supp{Supp} \opn\supp{supp} \opn\Sing{Sing}
  \opn\Ass{Ass} \opn\Min{Min}
  \opn\Ann{Ann} \opn\Rad{Rad} \opn\Soc{Soc}
  \opn\Syz{Syz} \opn\Im{Im} \opn\Ker{Ker} \opn\Coker{Coker}
  \opn\Am{Am} \opn\Hom{Hom} \opn\Tor{Tor} \opn\Ext{Ext}
  \opn\End{End} \opn\Aut{Aut} \opn\id{id} \opn\ini{in}
  \def\project{{\kappa}}
  \opn\nat{nat}
  \opn\pff{pf}
  \opn\Pf{Pf} \opn\GL{GL} \opn\SL{SL} \opn\mod{mod} \opn\ord{ord}
  \opn\Gin{Gin}
  \opn\Hilb{Hilb}\opn\adeg{adeg}\opn\std{std}\opn\ip{infpt}
  \opn\Pol{Pol}
  \opn\sat{sat}
  \opn\Var{Var}
  \opn\Gen{Gen}
  \opn\homogen{homogen}
  \opn\aff{aff} \opn\con{conv} \opn\relint{relint} \opn\st{st}
  \opn\lk{lk} \opn\cn{cn} \opn\core{core} \opn\vol{vol}
  \opn\link{link} \opn\star{star}
  \opn\gr{gr}
  \def\Sc{{\mathcal S}}
  \def\pot#1#2{#1[\kern-0.28ex[#2]\kern-0.28ex]}
  \opn\dirlim{\underrightarrow{\lim}}
  \opn\inivlim{\underleftarrow{\lim}}
  \let\sect=\cap
  \let\dirsum=\oplus
  \let\tensor=\otimes
  \let\iso=\cong
  \let\Dirsum=\bigoplus
  \let\to=\rightarrow
  \let\To=\longrightarrow
  \def\Implies{\ifmmode\Longrightarrow \else
        \unskip${}\Longrightarrow{}$\ignorespaces\fi}
  \def\implies{\ifmmode\Rightarrow \else
        \unskip${}\Rightarrow{}$\ignorespaces\fi}
  \def\iff{\ifmmode\Longleftrightarrow \else
        \unskip${}\Longleftrightarrow{}$\ignorespaces\fi}
  \CheckCommand*\refstepcounter[1]{\stepcounter{#1}%
      \protected@edef\@currentlabel
       {\csname p@#1\endcsname\csname the#1\endcsname}%
  }
  \renewcommand*\refstepcounter[1]{\stepcounter{#1}%
    \protected@edef\@currentlabel
      {\csname p@#1\expandafter\endcsname\csname the#1\endcsname}%
  }
  \def\labelformat#1{\expandafter\def\csname p@#1\endcsname##1}
  \DeclareRobustCommand\Ref[1]{\protected@edef\@tempa{\ref{#1}}%
     \expandafter\MakeUppercase\@tempa
  }
  \newcommand{\numberlike}[2]{%
     \expandafter\def\csname c@#1\endcsname{%
         \expandafter\csname c@#2\endcsname}%
  }
 \def\DefaultNumberTheoremWithin{section}
 \theoremstyle{plain}
  \newtheorem{Lemma}{Lemma}
     \numberwithin{Lemma}{\DefaultNumberTheoremWithin}
     \numberwithin{Claim}{\DefaultNumberTheoremWithin}
  \newtheorem{Theorem}{Theorem}
     \numberwithin{Theorem}{\DefaultNumberTheoremWithin}
  \newtheorem{Corollary}{Corollary}
     \numberwithin{Corollary}{\DefaultNumberTheoremWithin}
  \newtheorem{Proposition}{Proposition}
     \numberwithin{Proposition}{\DefaultNumberTheoremWithin}
     \numberwithin{Conjecture}{\DefaultNumberTheoremWithin}
  \theoremstyle{definition}
     \numberwithin{Definition}{\DefaultNumberTheoremWithin}
  \theoremstyle{definition}
     \numberwithin{Question}{\DefaultNumberTheoremWithin}
  \theoremstyle{definition}
     \numberwithin{Problem}{\DefaultNumberTheoremWithin}
  \theoremstyle{remark}
     \numberwithin{Remark}{\DefaultNumberTheoremWithin}
  \theoremstyle{remark}
     \numberwithin{Example}{\DefaultNumberTheoremWithin}
     \numberwithin{Case}{Lemma}
     \numberwithin{Step}{Lemma}
  \let\epsilon\varepsilon
  \let\phi=\varphi
  \let\kappa=\varkappa
  \opn\dis{dis}
  \def\pnt{{\raise0.5mm\hbox{\large\bf.}}}
  \opn\Lex{Lex}
\begin{document}

  \title{Homology of powers of ideals: Artin--Rees numbers of syzygies and the Golod property}

   \subjclass{Primary: 13A30 Secondary: 13D02, 13D40}

  \author{J\"urgen Herzog}

  \address{Fachbereich Mathematik, Universit\"at Duisburg-Essen, Campus Essen, 45117
    Essen, Germany} \email{juergen.herzog@uni-essen.de}

  \author{Volkmar Welker}
  \address{Philipps-Universit\"at Marburg, Fachbereich Mathematik und Informatik,
            35032 Marburg, Germany} \email{welker@mathematik.uni-marburg.de}

  \author{Siamak Yassemi}
  \address{School of Mathematics, Statistics and Computer Science,
           College of Science, University of Tehran, Tehran, Iran, and School of
           Mathematics, Institute for Research in Fundamental Sciences (IPM), P.O. Box
           19395-5746, Tehran, Iran}
      \email{yassemi@ipm.ir}

  \begin{abstract}
    For an ideal $I$ in a regular local ring $(R,\mm)$ with residue class field $K = R/\mm$ or a standard graded 
    $K$-algebra $R$ we show that for $k \gg 0$ 
    \begin{itemize}
      \item the Artin--Rees number 
            of the syzygy modules of $I^k$ as submodules of the free modules from a free resolution is constant,
            and thereby present the Artin-Rees number as a proper replacement of regularity in the local situation,
      \item the ring $R/I^k$ is Golod, its Poincer{\'e}-Betti series is rational and the Betti numbers of the free
            resolution of $K$ over $R/I^k$ are polynomials in $k$ of a specific degree.
    \end{itemize}
    The first result is an extension of work of Kodiyalam and Cutkosky, Herzog \& Trung on the regularity of 
    $I^k$ for $k \gg 0$ from the graded situation to the local situation.
    The polynomiality consequence of the second result is an analog of work by Kodiyalam on the 
    behavior of Betti numbers of the minimal free resolution of $R/I^k$ over $R$.
  \end{abstract}

  \maketitle

\section{Introduction}
  Over the last 20 years the study of algebraic, homological and combinatorial properties of powers of 
  ideals has been one of the major topics in Commutative Algebra. In this paper we extend this in two 
  so far unexplored directions.

  \noindent {\sf Artin-Rees numbers :}
          The most important invariants of a graded ideal $I$ in a polynomial ring provided by the 
          Betti diagram are the projective dimension and the regularity. A result by Brodmann \cite{Brodmann} 
          shows that $\depth R/I^k$ and hence $\projdim I^k$ are constant for $k\gg 0$.
          It was shown in \cite{Kodiyalam2} and \cite{CutkoskyHerzogTrung} that the regularity 
          $\reg I^k$ of $I^k$ is a linear function for $k\gg 0$ (see also \cite{EisenbudHarris} and 
          \cite{EisenbudUlrich}) for structural results on the point of stabilization and constant 
          term of the linear function). While the regularity can only be defined for graded ideals, 
          the projective dimension is defined and is a finite number for any ideal in a regular local ring.
          Thus it is natural to ask: Which numerical invariant of an ideal in a regular local ring corresponds 
          to the regularity of a graded ideal in a polynomial ring? Does one obtain stability in the sense 
          of \cite{Kodiyalam2} and \cite{CutkoskyHerzogTrung} for high powers of $I$ ? We approach this question 
          by observing that for a graded ideal, the linearity of $\reg I^k$ for $k \gg 0$
          implies that there is an upper bound independent of $k$ for the degrees of the entries of the matrices 
          describing the syzygies of $I^k$. Thus in order to find a suitable replacement for the regularity 
          in the local case we need a measure that bounds the ``size'' of the entries of the syzygies. 
          We show that the Artin--Rees number (see \eqref{artinrees})
          of syzygy modules of $I^k$ inside the corresponding free module is constant for large $k$.
          We further support the choice of the Artin--Rees number as a
          substitute for regularity by comparing regularity and Artin--Rees number in \ref{comparison} for 
          finitely generated graded modules over polynomial rings which implies a linear upper bound on the regularity of $I^k$.

  \noindent {\sf Golod property :}
          In \cite{Kodiyalam1} Kodiyalam showed
          that the total Betti numbers $\beta_i^R(I^k)$ of an ideal $I$ in a Noetherian local ring 
          are polynomial functions for large $k$. In the case that $R$ is a polynomial
          ring and $I$ is a graded ideal, a certain refinement of this statement can be found 
          in the more recent paper \cite{HerzogWelker}.
          For an ideal $I$ in a regular local ring $(R,\mm)$ with $R/\mm = K$ or a polynomial 
          ring $R$ over $K$ we study the Betti numbers $\beta_i^{R/I^k}(K)$ of the
          free resolution of $K$ over $R/I^k$. We show in \ref{golod} that $R/I^k$ is Golod 
          for $k \gg 0$, a homological property that by definition implies
          trivial multiplication in the Tor algebra $\Tor_\bullet^R(R/I,K)$,  rationality of the 
          Poincar{\'e}-Betti series of $R/I^k$ by a result of Golod and thereby connects the Betti
          numbers $\beta_i^{R/I^k}(K)$ and $\beta_i^R(R/I^k)$. As a consequence we show in \ref{lem:bettipol} 
          that the Betti numbers $\beta_i^{R/I^k}(K)$ and the deviations $\epsilon_i(R/I^k)$
          are polynomial functions of a specific degree.
          We were inspired by a result of G.\ Levin \cite[Thm. 3.15]{Levin} saying that for any Noetherian local ring $(R,\mm)$ the  
          canonical epimorphism $R\to R/\mm^k$ is a Golod homomorphism for all $k\gg 0$.

  For the convenience of the reader and due to the lack of suitable references we recall in 
  Section \ref{sec:prelim} some basic facts about Artin--Rees numbers, and show in \ref{minmax} 
  how the degrees of the  entries of a matrix describing a graded submodule  $N$ of a free module 
  $F$ are bounded by the Artin--Rees number  $\rho(N,F)$. In Section~\ref{sec:asym} we consider the 
  asymptotic behaviour of the Artin--Rees numbers of the syzygies of powers of ideals, 
  prove \ref{growth} and \ref{comparison}.  In Section \ref{sec:golod}
  we study the Golod property of $R/I^k$ and prove \ref{golod} and \ref{lem:bettipol}.

  For all unexplained concepts from commutative algebra we refer the reader to the book \cite{Eisenbud}.

\section{Preliminaries regarding  Artin--Rees numbers}
  \label{sec:prelim}
  Let $(R,\mm)$ denote a Noetherian local ring or standard graded $K$-algebra with graded maximal ideal $\mm$,
  $M$ a finitely generated $R$-module and $N\subset M$ a submodule of $M$. In the graded case we assume that $M$
  is graded and $N$ is a graded submodule of $M$.

  By the Artin--Rees Lemma \cite[Lem. 5.1]{Eisenbud}, there exists an integer $r$ such that
  \begin{eqnarray}
     \label{artinrees}
     N\sect \mm^iM & = & \mm^{i-r}(N\sect \mm^{r}M) \quad \quad \text{for all~} i\geq r.
  \end{eqnarray}
  The smallest such number $r$ is called the {\em Artin-Rees number} and will be denoted $\rho(N,M)$.

  We denote by $S$ the associated graded ring $\gr_\mm(R)$ of $R$, and by $\nn$ the graded maximal ideal of $S$. Of course, if $R$ is standard graded,
  then $R=S$ and $\mm=\nn$. For an $R$-module $M$ and $x\in M$ with $x\neq 0$ we set $\ell(x)=x+\mm^{k+1}M$ where $k$ is the largest integer
  such that $x\in \mm^kM$. The element $\ell(x)\in \gr_\mm(M)$ is called the leading form of $x$.

  \begin{Proposition}
    \label{star}
    Let $N^*$ be the kernel of the  natural epimorphism $\gr_\mm(M)\to \gr_\mm(M/N)$ of graded $S$-modules.
    Then

    \begin{itemize}
       \item[(a)] $N^*=\sum_{x\in N} S\ell(x)$,
         \[
           (N^*)_k=(N\sect \mm^kM)/(N\sect \mm^{k+1}M)\quad \text{for all} \quad k,
         \]
         and
         \[
           \rho(N,M)=\max\{k\: (N^*/\nn N^*)_k\neq 0\}.
         \]
         In addition, in the graded case  $N^*=\sum_{x\in \homogen(N)} S\ell(x).$

      \item[(b)] for  $x_1,\ldots,x_r\in N$ such that $\ell(x_1),\ldots,\ell(x_r)$ generate $N^*$,  the elements $x_1,\ldots,x_r$ generate $N$.
    \end{itemize}
  \end{Proposition}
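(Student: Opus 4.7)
My plan is to begin by computing the graded pieces of $N^{*}$ explicitly. In degree $k$, the kernel of the natural map $\mm^k M/\mm^{k+1}M \to (\mm^k M + N)/(\mm^{k+1}M + N)$ can be evaluated via the modular identity
\[
   \mm^k M \cap (\mm^{k+1}M + N) \;=\; \mm^{k+1}M + (N \cap \mm^k M),
\]
yielding $(N^{*})_k = (N \cap \mm^k M)/(N \cap \mm^{k+1}M)$. The equality $N^{*} = \sum_{x \in N}S\ell(x)$ then follows at once: each $\ell(n)$ for $n \in N \cap \mm^k M$ represents an element of $(N^{*})_k$, and conversely every element of $(N^{*})_k$ arises from such an $n$. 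For the graded case I write an arbitrary $x \in N$ as $x = \sum_d x_d$ with $x_d \in N_d \subseteq N$ (using that $N$ is graded) and observe that $\ell(x) = \sum_{d \in D}\ell(x_d)$, where $D$ is the set of graded degrees attaining the minimum $\mm$-adic valuation; this reduces the generating set to $\homogen(N)$.

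For the third identity I compute $(\nn N^{*})_k$ in the same spirit. Since $\mm^{j-1}(N \cap \mm^{k-j}M) \subseteq N \cap \mm^{k-1}M$ for every $j \geq 1$, we have $\sum_{j \geq 1}\mm^j(N \cap \mm^{k-j}M) = \mm(N \cap \mm^{k-1}M)$, so $(N^{*}/\nn N^{*})_k = 0$ exactly when
\[
   N \cap \mm^k M \;=\; \mm(N \cap \mm^{k-1}M) + (N \cap \mm^{k+1}M).
\]
The main obstacle is to verify that the smallest $r$ for which this holds for all $k > r$ coincides with $\rho(N, M)$. Writing $V_k := N \cap \mm^k M$, one direction is trivial. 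For the converse I iterate the identity --- using $V_{k+i-1} \subseteq V_{k-1}$, and hence $\mm V_{k+i-1} \subseteq \mm V_{k-1}$ --- to get $V_k \subseteq \mm V_{k-1} + V_{k+j}$ for every $j \geq 1$. Krull's intersection theorem applied to the finitely generated module $M/\mm V_{k-1}$ then forces $V_k = \mm V_{k-1}$, which is the Artin--Rees relation in degree $k$.

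For (b) I set $N' := Rx_1 + \cdots + Rx_r \subseteq N$. A direct diagram chase, based on the surjection $M/N' \twoheadrightarrow M/N$, produces a short exact sequence of graded $S$-modules
\[
   0 \To N'^{*} \To N^{*} \To (N/N')^{*} \To 0,
\]
with $(N/N')^{*}$ defined as the kernel of the induced surjection $\gr_{\mm}(M/N') \twoheadrightarrow \gr_{\mm}(M/N)$. Since the $\ell(x_i)$ lie in $N'^{*}$ and already generate $N^{*}$, we get $N'^{*} = N^{*}$, hence $(N/N')^{*} = 0$. Applying the graded-piece description of (a) to $N/N' \subseteq M/N'$ then yields $(N/N') \cap \mm^k(M/N') = (N/N') \cap \mm^{k+1}(M/N')$ for every $k$, so $N/N' \subseteq \bigcap_k \mm^k(M/N') = 0$ by Krull's intersection theorem, i.e., $N = N'$.
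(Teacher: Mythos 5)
Your argument is correct. For part (a) your route is essentially the paper's: you compute $(N^*)_k$ by the modular law where the paper instead tensors the exact sequence $0\to N\to M\to M/N\to 0$ with $\mm^k$, but the two computations are equivalent; you are more careful on the identity $\rho(N,M)=\max\{k:(N^*/\nn N^*)_k\neq0\}$, which the paper dispatches with ``immediately'' --- your iteration $V_k\subseteq \mm V_{k-1}+V_{k+j}$ followed by Krull's intersection theorem applied to $M/\mm V_{k-1}$ is exactly the detail being suppressed there, and it is welcome. For part (b), however, you take a genuinely different path. The paper argues by successive approximation: given $x\in N$, write $\ell(x)=\sum\ell(a_i)\ell(x_i)$, pass to $x-\sum a_ix_i\in\mm^{k+1}M$, iterate until the error lies in $\mm^{\rho(N,M)+1}M$, then invoke the Artin--Rees relation to land in $\mm N$, and finish with Nakayama on $N=U+\mm N$. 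You instead set $N'=\sum Rx_i$, build the short exact sequence $0\to N'^*\to N^*\to(N/N')^*\to 0$ of graded $\gr_\mm(R)$-modules, observe that $N'^*=N^*$ forces $(N/N')^*=0$, and then read off from the graded-piece description of (a) that $N/N'\subseteq\bigcap_k\mm^k(M/N')=0$ by Krull. Your version is more structural and avoids the ``proceeding in this way'' iteration; the paper's is more hands-on and makes the role of the Artin--Rees number in terminating the approximation explicit. Both ultimately rest on the Krull/Nakayama circle of ideas, so neither is more elementary, but the two decompositions of the argument are different and each has expository merit.
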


  \begin{proof}
    \begin{itemize}
      \item[(a)] It is clear that $\ell(x)\in N^*$ for all $x\in N$. Conversely, suppose that  $\ell(x)=x+\mm^{k+1}M$ belongs to $N^*$.
        Then $\project(x)+\mm^{k+1}W=0$, where $W=M/N$ and $\project\:\; M\to W$ is the canonical epimorphism. Hence there exists
        $y\in \mm^{k+1}W$ such that $\project(x)=y$. Let $z\in \mm^{k+1}M$ with $\project(z)=y$ and set $x'=x-z$. Then $x'\in N$ and $\ell(x')=\ell(x)$.

        Next observe that the exact sequence
        \[
          0\to N\to M\to W\to 0
        \]
        induces for all $k$ the exact sequence
        \[
          0\to N\sect \mm^kM\to \mm^kM\to \mm^kW\to 0.
        \]
        This shows that $(N^*)_k=(N\sect \mm^kM)/(N\sect \mm^{k+1}M)$ for all $k$, and this implies immediately that $\rho(N,M)=\max\{k\: (N^*/\nn N^*)_k\neq 0\}$.

      \item[(b)] Let $U=\sum_{i=1}^rRx_i$. We want to show that $U=N$. Let $x\in N$ with $\ell(x)=x+\mm^{k+1}M$. By assumption there exist $a_i\in R$ such that
        $\ell(x)=\sum_i\ell(a_i)\ell(x_i)$. It follows that $x'=x-y\in \mm^{k+1}M$, where $y=\sum_ia_ix_i\in U$. Let $\deg \ell(x')=t$. Then
        $t\geq  k+1$, and as before we find $z\in U$ such that $x''=x'-z\in \mm^{t+1}M$. In other words, $x-w\in \mm^{t+1}M$ where $w=y+z\in U$. Proceeding in this
        way we can find for any given number $s$ an element $u\in U$ such that $x-u\in \mm^{s+1}M$. Choosing $s=\rho(N, M)$, we see that for any $x\in N$ there exists
        $u\in U$ such that $x-u\in\mm N$. Thus we have shown that $N=U+\mm N$. Nakayama's lemma implies that $U=N$, as desired.
    \end{itemize}
  \end{proof}

  \begin{Corollary}
    \label{minmax}
    Suppose $R$ is standard graded and  $F$ is a finitely generated graded free $R$-module with homogeneous basis $e_1,\ldots,e_s$. Let  $N\subset F$ be a graded submodule
    of $F$. Then for any   minimal set of homogeneous generators of $x_1,\ldots,x_t$ of $N$ with
    \[
      x_i=\sum_{j=1}^sa_{ij}e_j, \quad i=1,\ldots,t,
    \]
    such that
    \begin{eqnarray}
      \label{min}
      \rho(N,F)\geq \min_{i=1,\ldots,t}\{\min_{j=1,\ldots,s}\{\deg a_{ij}\}\},
    \end{eqnarray}
    and
    \begin{eqnarray}
    \label{max}
    \rho(N,F)\geq \max_{i=1,\ldots,t}\{\min_{j=1,\ldots,s}\{\deg a_{ij}\}\}
    \end{eqnarray}
    for a suitable choice of $x_1,\ldots,x_t$.

    Here we use the convention that $\deg a=\infty$ if $a=0$.
  \end{Corollary}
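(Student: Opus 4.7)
The plan is to exploit the characterization $\rho(N,F)=\max\{k:(N^*/\nn N^*)_k\neq 0\}$ from Proposition~\ref{star}(a), together with the observation that for a homogeneous $x=\sum_ja_{ij}e_j\in N$ the membership $x\in\mm^kF$ is equivalent to $a_{ij}\in\mm^k$ for every $j$; thus the largest $k$ with $x\in\mm^kF$---that is, the degree of $\ell(x)$ as a homogeneous element of the graded $S$-module $N^*$---equals $\min_j\deg a_{ij}$ (with the convention $\deg 0=\infty$).

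For inequality~\eqref{min}, let $x_1,\ldots,x_t$ be any minimal homogeneous generating set of $N$, set $k_i=\min_j\deg a_{ij}$, let $k_0=\min_ik_i$, and pick $i_0$ with $k_{i_0}=k_0$. Since every $x_i\in\mm^{k_0}F$, we have $N\subseteq\mm^{k_0}F$ and $\mm N\subseteq\mm^{k_0+1}F$. I would then show that $\ell(x_{i_0})$ has nonzero image in $(N^*/\nn N^*)_{k_0}$: computing $(\nn N^*)_{k_0}$ as the image of $\mm(N\sect\mm^{k_0-1}F)=\mm N$ in $(N^*)_{k_0}$, the contrary would yield $x_{i_0}\in\mm N+(N\sect\mm^{k_0+1}F)\subseteq\mm^{k_0+1}F$, contradicting $k_{i_0}=k_0$. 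By Proposition~\ref{star}(a) this gives $\rho(N,F)\geq k_0$.

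For inequality~\eqref{max} (and the suitable choice of generating set), I would use Proposition~\ref{star}(a) to select from leading forms of homogeneous elements of $N$ a minimal homogeneous generating set $\ell(y_1),\ldots,\ell(y_m)$ of the finitely generated graded $S$-module $N^*$. By graded Nakayama each $\ell(y_j)$ has nonzero image in $N^*/\nn N^*$, so $\deg\ell(y_j)\leq\rho(N,F)$ for every $j$. Proposition~\ref{star}(b) ensures that $y_1,\ldots,y_m$ generate $N$; iteratively discarding redundant elements produces a minimal homogeneous generating set $x_1,\ldots,x_t$ of $N$ as a subset of $\{y_1,\ldots,y_m\}$, and every retained $x_i$ still satisfies $\min_l\deg a_{il}=\deg\ell(x_i)\leq\rho(N,F)$. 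The main subtlety lies precisely here: a minimal generating set of $N^*$ by leading forms may involve strictly more elements than a minimal generating set of $N$, so one cannot directly declare the $y_j$'s themselves to be a minimal generating set of $N$, and the extraction step is essential for transferring the degree bound to each element of an honest minimal generating set of $N$.
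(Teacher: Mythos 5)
Your proof is correct. For inequality~\eqref{max} you take essentially the same route as the paper: by Proposition~\ref{star}(a), choose homogeneous $y_1,\ldots,y_m\in N$ whose leading forms minimally generate $N^*$; by graded Nakayama each $\ell(y_j)$ has nonzero image in $N^*/\nn N^*$, so $\deg\ell(y_j)\le\rho(N,F)$; by Proposition~\ref{star}(b) the $y_j$ generate $N$, and since $R$ is standard graded one may extract a minimal homogeneous generating subset $x_1,\ldots,x_t$ whose orders still satisfy $\min_l\deg a_{il}=\deg\ell(x_i)\le\rho(N,F)$. You correctly identify the extraction step as the crux, and it is the same step that appears in the paper (where it is phrased as ``a suitable subset of $x_1,\ldots,x_r$ is a minimal set of generators of $N$'').

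For inequality~\eqref{min} your argument is self-contained and departs from the paper's. The paper deduces \eqref{min} from \eqref{max} by remarking that $\min_i\min_j\deg a_{ij}$ is the $\mm$-adic order of $N$ inside $F$, hence is independent of the chosen minimal generating set, so the bound obtained for the suitable generating set in \eqref{max} transfers to an arbitrary one. You instead argue directly: with $k_0=\min_ik_i$ one has $N\subseteq\mm^{k_0}F$, so in fact $(N^*)_j=0$ for $j<k_0$, whence $(\nn N^*)_{k_0}=0$ and $(N^*/\nn N^*)_{k_0}=(N^*)_{k_0}$ contains the nonzero class $\ell(x_{i_0})$; Proposition~\ref{star}(a) then gives $\rho(N,F)\ge k_0$. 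Your computation of $(\nn N^*)_{k_0}$ via $\mm(N\cap\mm^{k_0-1}F)$ is correct but could be shortened to the one-line observation that $N^*$ vanishes below degree $k_0$. Both routes rest on the same description $\rho(N,F)=\max\{k:(N^*/\nn N^*)_k\neq 0\}$; the paper's derivation is shorter once \eqref{max} is available, while yours has the mild advantage of proving \eqref{min} without appealing to \eqref{max}.
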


  \begin{proof} We first prove \eqref{max}.
    By \ref{star}(a) there exist homogeneous elements $x_1,\ldots,x_r\in N$ such that the set of leading forms $$\ell(x_1),\ldots,\ell(x_r)$$ is a minimal set of
    generators of $N^*$. By \ref{star}(b), $x_1,\ldots,x_r$ is a system of generators of $N$. Since $R$ is standard graded, a suitable subset of $x_1,\ldots,x_r$,
    say, $x_1,\ldots,x_t$ is a minimal set of generators of $N$. Therefore, $\ell(x_1),\ldots,\ell(x_t)$ is part of a minimal
    set of generators of $N^*$. Hence, if  $\ell(x_i)=x_i+\mm^{k_i+1}F$ for $i=1,\ldots,t$, then it  follows from
    \ref{star}(a) that $k_i\leq \rho(N,F)$ for $i=1,\ldots,t$. Now if $x_i=\sum_{j=1}^sa_{ij}e_j$, then $k_i=\min\{\deg a_{ij}\:\; j=1,\ldots,s\}$. Thus the assertion follows.

    Inequality \eqref{max} is stronger than Inequality \eqref{min}. Moreover, the right hand side of Inequality \eqref{min} is independent of the chosen minimal
    system of generators. From this Inequality \eqref{min} follows for all minimal systems of generators of $N$.
  \end{proof}

\section{Asymptotic behavior of Artin-Rees numbers}
  \label{sec:asym}

  In this section we are going to consider the behavior of Artin-Rees numbers of the syzygy modules of powers of ideals and compare them in the graded case
  with their regularity. For a graded $R$-module $M$ over a polynomial ring $R$ of projective dimension $p$ we denote by
  $\reg_j := \max \{ k-j~:~\beta_{jk}^R (M) \neq 0\}$, $0 \leq j \leq p$, the regularity of
  its $j$\textsuperscript{th} syzygy module, here $\beta_{jk}^R(M)$ are the graded Betti numbers of $M$ over $R$.
  In particular, $\reg_0(M)$ is the maximal degree of a
  homogeneous generator of $M$. We write $\reg(M) := \max_{0 \leq j \leq p} \reg_j(M)$ for the regularity of $M$.
  For the sake of later use we also introduce here for an module $M$ over a standard graded or local ring $R$ the
  notation $\beta_i^R(M)$ for the $i$\textsuperscript{th} Betti number of $M$ over $R$.

  Since powers of ideals are best studied as the graded components of Rees algebras, which for graded ideals have a natural bigraded structure, one is led
  to consider bigraded modules.

  \begin{Lemma}
    \label{bigraded}
      Let $K$ be a field, $T=K[x_1,\ldots,x_n,y_1,\ldots,y_m]$ the standard  bigraded polynomial ring over $K$, $A=K[x_1,\ldots,x_n]$ and $W$ a
      finitely generated bigraded $T$-module. For each integer $k$ consider the finitely generated graded $A$-module
      \[
         W_k=\Dirsum_{j}W_{jk}.
      \]
      Then  $\reg_0(W_k)$ is constant for $k\gg 0$.
  \end{Lemma}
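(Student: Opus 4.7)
The plan is to translate the statement about minimal $A$-module generators of $W_k$ into a structural statement about the bigraded $T$-module $V := W/\mm_A W$, where $\mm_A := (x_1,\ldots,x_n)T$, and then exploit the fact that $V$ is annihilated by $\mm_A$ and hence lives over the smaller polynomial ring $B := K[y_1,\ldots,y_m] = T/\mm_A$. Concretely, $W_k$ is a finitely generated graded $A$-module, bounded below in degree (since $W$ is finitely generated over $T$ and the $x$-action preserves the $y$-grading), so the graded Nakayama lemma yields
$$\reg_0(W_k) \;=\; \max\bigl\{\, j \,:\, V_{(j,k)} \neq 0 \,\bigr\}.$$
It therefore suffices to show that this quantity is eventually constant in $k$.

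Next I would analyze the structure of $V$. As a quotient of $W$ it is finitely generated bigraded over $T$, and being annihilated by $\mm_A$ it is a finitely generated bigraded module over $B$. Because $B$ sits entirely in $x$-degree $0$, multiplication by $B$ preserves the $x$-degree, so $V$ decomposes as $V = \bigoplus_j V^{(j)}$ with $V^{(j)} := \bigoplus_k V_{(j,k)}$ a graded $B$-module in the $y$-grading. A finite set of bihomogeneous $T$-generators of $V$, say in bidegrees $(c_1,d_1),\ldots,(c_r,d_r)$, is also a set of $B$-generators, so each $V^{(j)}$ is finitely generated over $B$ and $V^{(j)} = 0$ unless $j \in \{c_1,\ldots,c_r\}$; in particular only finitely many summands are non-zero.

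Finally I would invoke the standard Hilbert-function dichotomy for finitely generated graded modules $M$ over the standard graded polynomial ring $B$: either $\dim M = 0$, in which case $M_k = 0$ for $k \gg 0$, or $\dim M \geq 1$, in which case the Hilbert polynomial of $M$ forces $\dim_K M_k > 0$ for $k \gg 0$. Applying this to each $V^{(j)}$ and setting $J := \{\, j : \dim V^{(j)} \geq 1 \,\}$, one obtains that for all $k \gg 0$ the set $\{\, j : V_{(j,k)} \neq 0 \,\}$ is exactly $J$. Hence $\reg_0(W_k) = \max J$ for all sufficiently large $k$ (with the convention $\max \emptyset = -\infty$, corresponding to $W_k = 0$ by Nakayama). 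No single step is especially hard; the essential point is the combination of graded Nakayama with the observation that $V$ descends to the smaller polynomial ring $B$, after which the conclusion is routine Hilbert-function bookkeeping in the two gradings.
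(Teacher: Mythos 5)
Your proof is correct, but it takes a genuinely different route from the paper's. The paper argues by induction on the number of bihomogeneous $T$-generators of $W$: writing $W_k=\sum_{i=1}^r AB_{k-b_i}w_i$ with $B=K[y_1,\ldots,y_m]$, it observes $\reg_0(W_k)\leq a_r$ and then runs a dichotomy on whether the top generator $w_r$ stays essential for all large $k$ (in which case $\reg_0(W_k)=a_r$ eventually) or becomes redundant at some $k_1$ (in which case it is redundant for all $k\geq k_1$, so one may drop $w_r$ and induct). You instead pass through graded Nakayama to reduce the question to the vanishing pattern of $V=W/\mm_AW$, note that $V$ is a finitely generated module over $B=T/\mm_A$ and decomposes into finitely many graded $B$-modules $V^{(j)}$ indexed by $x$-degree, and then invoke the Hilbert-function dichotomy (dimension $0$ versus dimension $\geq 1$) to see that the support set $\{j:V_{(j,k)}\neq 0\}$ stabilizes. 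Both arguments are sound; the paper's is more elementary and hands-on, while yours is more structural and has the mild advantage of identifying the eventual value of $\reg_0(W_k)$ explicitly as $\max\{j:\dim_B V^{(j)}\geq 1\}$, which makes the mechanism of stabilization transparent. One small point of bookkeeping you should make explicit: the identification $(W/\mm_A W)_k = W_k/\mm_A W_k$ used in the Nakayama step requires noting that $\mm_A$ is generated in bidegree $(1,0)$, so that $(\mm_AW)_k=\mm_A\cdot W_k$; this is true and easy, but it is the hinge on which the reduction turns.
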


  \begin{proof}
    Let $w_1,\ldots,w_r$ be a set of homogeneous generators of $W$ with $\deg w_i=(a_i,b_i)$. We may assume that $a_1\leq a_2\leq \cdots\leq a_r$.
    Let $B=K[y_1,\ldots,y_m]$ be the polynomial ring over $K$ in the variables $y_1,\ldots,y_m$. Then
    \[
      W_k=\sum_{i=1}^rAB_{k-b_i}w_i
    \]
    for all $k$. It follows that $\reg_0(W_k)\leq a_r$.

    Assume that $B_{k-b_r}w_r\not\subset \sum_{i=1}^{r-1}AB_{k-b_i}w_i$ for all $k\geq b$, where $b=\max\{b_i\:\; i=1,\ldots,r\}$. Then $\reg_0(W_k)=a_r$ for all $k\geq b$.

    On the other hand, if $B_{k_1-b_r}w_r\subset \sum_{i=1}^{r-1}AB_{k_1-b_i}w_i$ for some $k_1\geq b$, then
    $B_{k-b_r}w_r\subset \sum_{i=1}^{r-1}AB_{k-b_i}w_i$ for all $k\geq k_1$. It follows that $W_k=W'_k$ for all $k\geq k_1$, where
    $W'=\sum_{i=1}^{r-1}Tw_i$. Applying induction on the number of generators of $W$, the desired conclusion follows.
  \end{proof}

  Let $S=R[y_1,\ldots,y_m]$ be the polynomial ring over $(R,\mm)$ in the variables $y_1,\ldots,y_m$, where according to our general assumption
  $R$ is either a Noetherian local ring or a standard graded $K$-algebra. We consider $S$ to be a graded ring by setting $\deg a=0$ for all
  $a\in R\setminus\{0\}$ and $\deg y_j=1$ for $j=1,\ldots,m$. In particular, for a finitely generated graded $S$-module $M$ each graded component $M_k$ is a finitely generated $R$-module.

  \begin{Proposition}
    \label{constant}
    Let $N\subset M$ be  graded $S$-modules. Then $\rho(N_k,M_k)$ is constant for $k\gg 0$.
  \end{Proposition}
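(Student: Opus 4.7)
The plan is to express the Artin--Rees numbers $\rho(N_k,M_k)$ uniformly as the top nonvanishing degrees of the $y$-graded components of a single finitely generated bigraded module, and then invoke \ref{bigraded}.

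First, I equip $M$ with the $\mm S$-adic filtration and form the associated bigraded module $\widetilde M := \gr_{\mm S}(M) = \Dirsum_{i,k}\mm^iM_k/\mm^{i+1}M_k$. Since $\mm$ sits in $y$-degree $0$, the $y$-grading on $M$ is preserved by the filtration, so $\widetilde M$ becomes a bigraded module over the Noetherian bigraded ring $T':=\gr_{\mm S}(S)=\gr_\mm(R)[y_1,\dots,y_m]$, with $\gr_\mm(R)_i$ in bidegree $(i,0)$ and each $y_j$ in bidegree $(0,1)$. By the standard fact that the associated graded of a finitely generated module is finitely generated over the associated graded ring, $\widetilde M$ is finitely generated over $T'$. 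Applying the same construction to $M/N$, the kernel of the induced surjection $\widetilde M \twoheadrightarrow \gr_{\mm S}(M/N)$ coincides in each $y$-degree $k$ with the submodule $N_k^*\subset\gr_\mm(M_k)$ from \ref{star}(a); hence $\widetilde N := \Dirsum_k N_k^*$ is a bigraded $T'$-submodule of $\widetilde M$, and is finitely generated because $T'$ is Noetherian.

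Next, let $\nn$ denote the graded maximal ideal of $\gr_\mm(R)$, and consider
\[
  W \;:=\; \widetilde N\big/(\nn T')\widetilde N \;\iso\; \widetilde N\tensor_{\gr_\mm(R)}K,
\]
a finitely generated bigraded module over $T'/\nn T' = K[y_1,\dots,y_m]$. Since $\nn$ sits in $y$-degree $0$, a direct verification gives that its $y$-degree $k$ component is $W_k = N_k^*/\nn N_k^*$, and the final formula of \ref{star}(a) then yields
\[
  \rho(N_k,M_k) \;=\; \max\{j : W_{j,k}\neq 0\} \;=\; \reg_0(W_k),
\]
with $W_k$ viewed as a finite-dimensional graded $K$-vector space.

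Finally, \ref{bigraded} applies to the bigraded $K[y_1,\dots,y_m]$-module $W$ (the degenerate case $n=0$, $A=K$ of that statement; the proof uses only a finite bihomogeneous generating set and the shape of the $y$-grading, and carries over unchanged), and yields that $\reg_0(W_k)=\rho(N_k,M_k)$ is constant for $k\gg 0$, as required. The main obstacle is the first paragraph: one must recognize the correct bigraded structure that simultaneously packages all the $N_k^*$ into one finitely generated $T'$-module. Once this is in place, the remainder of the argument is essentially formal bookkeeping.
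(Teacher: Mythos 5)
Your proof is correct and takes essentially the same route as the paper: both package the modules $N_k^*$ into a single finitely generated bigraded module over $\gr_\mm(S)=\gr_\mm(R)[y_1,\dots,y_m]$ and invoke \ref{bigraded}. The only variation is that the paper applies \ref{bigraded} directly to $N^*$ viewed as a bigraded module over $T=K[x_1,\dots,x_n,y_1,\dots,y_m]$ (via an epimorphism $T\to\gr_\mm(S)$, $n=\embdim R$), reading off $\rho(N_k,M_k)=\reg_0\bigl((N^*)_k\bigr)$, whereas you first pass to the quotient $W=N^*/\nn N^*$ and apply \ref{bigraded} in the degenerate case $n=0$, $A=K$. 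The two are equivalent since $\reg_0\bigl((N^*)_k\bigr)=\max\{j:(N^*_k/\nn N^*_k)_j\neq0\}$, and the proof of \ref{bigraded} indeed goes through unchanged for $n=0$; the paper's version is marginally more direct because it avoids the need to check the degenerate case, at the cost of a one-line appeal to the surjection $T\to\gr_\mm(S)$.
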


  \begin{proof}
    By \ref{star} we have $\rho(N_k,M_k)=\reg_0(N_k)^*$. We let $W=M/N$ and $N^*$ be the kernel of the canonical epimorphism $\gr_\mm(M)\to \gr_\mm(W)$.
    The ring $\gr_\mm(S)=\gr_\mm(R)[y_1,\ldots,y_m]$ is naturally bigraded  and $N^*$ is a bigraded $\gr_\mm(S)$-module with
    \[
      (N^*)_{jk}= \Ker (\mm^{j}M_k/\mm^{j+1}M_k\To \mm^{j}W_k/\mm^{j+1}W_k).
    \]
    We set $(N^*)_k=\Dirsum_j(N^*)_{jk}$. Then we see that $(N^*)_k=(N_k)^*$ for all $k$, and hence  $\rho(N_k,M_k)=\reg_0(N^*)_k$ for all $k$.

    If $\embdim R=n$, then there exists an epimorphism $T=K[x_1,\ldots,x_n,y_1,\ldots,y_m]\to \gr_\mm(S)$ of standard bigraded $K$-algebras. Thus we may
    view $N^*$ to be a bigraded $T$-module. Applying \ref{bigraded}, we conclude that    $\rho(N_k,M_k)$ is constant for $k\gg 0$.
  \end{proof}

  Next we will apply this result to study the Artin-Rees numbers of the syzygies of powers of an ideal.
  Let $(\FF,\varphi)$ be a graded  free $R$-resolution of $M$. The $j$\textsuperscript{th} syzygy module of $M$ with respect to $\FF$ is defined to be the module
  $N_j=\Im(\varphi_j)\subset F_{j-1}$. In the case that $(\FF,\varphi)$ is  a graded minimal  free $R$-resolution of $M$, we set
  $\rho_j(M):=\rho(N_j, F_{j-1})$ for all $j\geq 1$ and $\rho_0(M) := \reg_0(M)$.

  \begin{Theorem}
    \label{growth}
    Let $(R,\mm)$ be a Noetherian local ring or a standard graded $K$-algebra with graded maximal ideal $\mm$, and $I\subset \mm$ an ideal. In the case
    that $R$ is graded, we assume that $I$ is a  graded ideal. Then for all $j\geq 1$ there exists an integer $c_j$ such that $\rho_j(I^k)=c_j$ for all $k\gg 0$.
    In particular, if $R$ is regular, then there exists an integer $c$ such that $\rho_j(I^k)\leq c$ for all $j\geq 1$ and all $k$.
  \end{Theorem}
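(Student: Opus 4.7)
The plan is to lift the problem to the Rees algebra and invoke \ref{constant}. Let $f_1,\ldots,f_m$ generate $I$, and set $S := R[y_1,\ldots,y_m]$, graded by $\deg y_i = 1$ and $\deg a = 0$ for $a \in R$. The natural surjection $S \to \Rees(I) = \Dirsum_{k\geq 0} I^k t^k$, $y_i \mapsto f_i t$, makes $\Rees(I)$ into a finitely generated graded $S$-module. I will take any graded free $S$-resolution $(\FF, \varphi)$ of $\Rees(I)$, say $F_j = \Dirsum_l S(-a_{j,l})$. Because each shifted summand $S(-a)$ has free $R$-components of finite rank in every $y$-degree, passing to the $y$-degree $k$ component produces a (generally non-minimal) free $R$-resolution $(\FF)_k$ of $(\Rees(I))_k = I^k$. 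Writing $\tilde{N}_j := \Im(\varphi_j) \subset F_{j-1}$ as graded $S$-modules, \ref{constant} applied to the inclusion $\tilde N_j \subset F_{j-1}$ yields directly that $\rho((\tilde N_j)_k, (F_{j-1})_k)$ is constant for $k \gg 0$.

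The main obstacle will be to identify this constant with $\rho_j(I^k)$, which is defined from the \emph{minimal} $R$-resolution of $I^k$. To bridge this gap I will invoke the standard fact that any free resolution of a finitely generated module over a Noetherian local ring (respectively, any graded free resolution over a standard graded $K$-algebra) decomposes as the direct sum of the minimal resolution and a complex that is itself a direct sum of trivial complexes $R \xrightarrow{\id} R$ (respectively, $R(-a) \xrightarrow{\id} R(-a)$) supported in two adjacent homological degrees. Applied to $(\FF)_k$, the trivial complexes living at positions $(j, j-1)$ contribute a free summand $A$ which appears identified inside both $(F_{j-1})_k$ and $(\tilde N_j)_k$, while those at $(j-1, j-2)$ contribute a summand $A' \subset (F_{j-1})_k$ that is mapped injectively to $(F_{j-2})_k$ and hence meets $(\tilde N_j)_k$ trivially. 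A short calculation exploiting $\mm^i(X \oplus Y) = \mm^i X \oplus \mm^i Y$ on free summands will then give
\[
   \rho\bigl((\tilde N_j)_k, (F_{j-1})_k\bigr) \; = \; \rho\bigl(N^{\min}_j, F^{\min}_{j-1}\bigr) \; = \; \rho_j(I^k),
\]
completing the first assertion.

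For the ``in particular'' statement, $R$ regular implies $\projdim_R I^k \leq \dim R - 1$, so $\rho_j(I^k) = 0$ for $j \geq \dim R$ (using the convention $\rho(0,F) = 0$) and only finitely many indices $j$ are relevant. Each of these finitely many sequences $(\rho_j(I^k))_k$ stabilizes at some $c_j$ for $k \geq k_j$, producing a uniform bound for $k \geq \max_j k_j$; combining with the finitely many values $\rho_j(I^k)$ occurring for smaller $k$, the maximum yields a single constant $c$ with $\rho_j(I^k) \leq c$ for all $j \geq 1$ and all $k \geq 1$.
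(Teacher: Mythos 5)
Your proof follows essentially the same route as the paper: pass to the Rees algebra $R(I)$ as a graded $S = R[y_1,\ldots,y_m]$-module, take a graded free $S$-resolution whose degree-$k$ strand resolves $I^k$, apply \ref{constant} to the syzygy inclusions, and transfer the resulting stabilization from the (non-minimal) strand resolution to the minimal one via the decomposition of a free resolution into a minimal part plus trivial complexes. If anything, your bridging step is stated more carefully than the paper's, which writes $F_{j-1}^{(k)} = G_{j-1}^{(k)} \oplus H_{j-1}^{(k)}$ and $N_j^{(k)} = W_j^{(k)} \oplus H_{j-1}^{(k)}$ with one and the same $H_{j-1}^{(k)}$ (not literally correct in general, since the trivial complexes sitting in positions $(j-1,j-2)$ contribute to $F_{j-1}^{(k)}$ but not to $N_j^{(k)}$); your version, which keeps separate track of the extra summand $A'$ of $F_{j-1}^{(k)}$ meeting the syzygy trivially, is the precise formulation, and both versions yield the required equality $\rho(N_j^{(k)}, F_{j-1}^{(k)}) = \rho(W_j^{(k)}, G_{j-1}^{(k)})$.
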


  \begin{proof}
    The Rees ring $R(I)=\Dirsum_{k\geq 0}I^kt^k$  of $I=(f_1,\ldots,f_m)$ is a standard graded $R$-algebra with generators $f_it$ for $i=1,\ldots,m$.
    Let $S=R[y_1,\ldots,y_m]$ be the polynomial ring over $R$ in the indeterminates $y_1,\ldots,y_m$. We define the surjective $R$-algebra homomorphism
    $\project\: S\to R(I)$ with $\project(y_i)=f_it$ for $i=1,\ldots,m$. Then $R(I)=S/J$, where $J=\Ker \project$. Let $(\FF,\varphi)$ be a graded minimal
    free $S$-resolution  of $R(I)$, minimal in the sense that $\varphi(\FF)\subset \nn\FF$, where $\nn=(\mm, y_1,\ldots,y_m)$.


    We denote by $\FF^{(k)}$ the $k$\textsuperscript{th} homogeneous component of the resolution $\FF$. Then $\FF^{(k)}$ is a (not necessarily minimal)
    free $R$-resolution of $I^k$. Let $N_j$ be the $j$\textsuperscript{th} syzygy module of $R(I)$ viewed as an $S$-module. Then the  $k$\textsuperscript{th}
    component $N_j^{(k)}$  of $N_j$ is the $j^{th}$  syzygy module of $I^k$ with respect to the resolution $\FF^{(k)}$.

    By \ref{constant}, there exists an integer $c_j$ such that  $\rho(N_j^{(k)},F_{j-1}^{(k)})=c_j$ for $k\gg 0$.
    Let $\GG^{(k)}$ be a minimal graded free $R$-resolution of $I^k$, and $W_j^{(k)}$ be the $j^{th}$ syzygy module of $I^k$ with respect to $\GG$. Then for each
    $k$ there exists a free $R$-module $H_{j-1}^{(k)}$ such that $F_{j-1}^{(k)}=G_{j-1}^{(k)}\dirsum H_{j-1}^{(k)}$ and $N_j^{(k)}=W_j^{(k)}\dirsum H_{j-1}^{(k)}$. We deduce   that
    $\rho(N_j^{(k)},F_{j-1}^{(k)})=\rho(W_j^{(k)},G_{j-1}^{(k)})$ for all $k$.
    This yields the desired conclusion.
  \end{proof}

  The following result provides a comparison of the numbers $\rho_j(M)$ with the regularity $\reg(M)$ of $M$ in the case that $M$ is a graded module over the polynomial ring.

  \begin{Theorem}
    \label{comparison}
    Let $R=K[x_1,\ldots,x_n]$ be the polynomial ring, and $M$ a finitely generated graded $R$-module of projective dimension $p$. Then
    \[
      \reg_j(M)\leq \sum_{k=0}^j\rho_k(M)-j\quad \text{for}\quad j=0,\ldots,p.
   \]
   In particular, $\reg(M)\leq  \max\{\sum_{k=0}^j\rho_k(M)-j\:\; j=0,\ldots,p\}$.
  \end{Theorem}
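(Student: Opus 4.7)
The plan is to prove the inequality by induction on $j$, with the base case $j=0$ being immediate from the definition $\rho_0(M) = \reg_0(M)$. For $j \geq 1$ the target is the one-step recursion
\[
  \reg_j(M) \leq \reg_{j-1}(M) + \rho_j(M) - 1,
\]
after which iteration produces $\reg_j(M) \leq \sum_{k=0}^j \rho_k(M) - j$, and taking a maximum over $j = 0,\ldots,p$ gives the stated bound on $\reg(M)$.

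To establish the recursion I would apply inequality~\eqref{max} of \ref{minmax} to the pair $N = N_j \subset F = F_{j-1}$ coming from a minimal graded free resolution of $M$, using the specific minimal homogeneous generating set $x_1,\ldots,x_t$ of $N_j$ that the corollary provides. Writing $F_{j-1} = \Dirsum_l R(-d_{j-1,l})$ with homogeneous basis $e_1,\ldots,e_s$ and $x_i = \sum_l a_{il}e_l$, let $\delta_i = \deg x_i$; homogeneity forces $\deg a_{il} = \delta_i - d_{j-1,l}$ whenever $a_{il} \neq 0$. By graded Nakayama, any two minimal homogeneous generating systems of $N_j$ share the same degree multiset, so $\max_i \delta_i = \reg_j(M) + j$, matching the Betti-number definition of $\reg_j$.

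Inequality~\eqref{max} now guarantees, for each $i$, an index $l_i$ with $\deg a_{i,l_i} \leq \rho_j(M)$, i.e., $\delta_i - d_{j-1,l_i} \leq \rho_j(M)$. Combining with $d_{j-1,l_i} \leq \reg_{j-1}(M) + (j-1)$ yields $\delta_i \leq \reg_{j-1}(M) + (j-1) + \rho_j(M)$ for every $i$; taking the maximum on the left gives the recursion. Unwinding the induction produces the stated sum.

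The main subtlety --- and in effect the only real obstacle --- is the careful bookkeeping between three kinds of degrees: the module-degrees $\delta_i$ of the generators of $N_j$ inside $F_{j-1}$, the shift-degrees $d_{j-1,l}$ recording the grading of $F_{j-1}$, and the polynomial-degrees of the coefficients $a_{il}$ that are the objects controlled by the Artin--Rees number. One must also insist on the \emph{suitable} minimal generating system supplied by \ref{minmax}, rather than an arbitrary one, so that inequality~\eqref{max} (and not merely \eqref{min}) is at one's disposal; in the graded setting this costs nothing since the degree multiset of minimal generators is an invariant of $N_j$.
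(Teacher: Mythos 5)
Your proof is correct and follows essentially the same route as the paper's: both arguments are an induction on $j$ resting on inequality~\eqref{max} of \ref{minmax}, using the relation $\deg a_{il}=\delta_i-d_{j-1,l}$ for nonzero entries together with the induction hypothesis on the shifts in $F_{j-1}$. Your reformulation as the one-step recursion $\reg_j(M)\leq \reg_{j-1}(M)+\rho_j(M)-1$ is merely a cleaner packaging of the paper's inequality chain $\rho_j(M)\geq b_{jr_0}-b_{j-1,s_1}\geq b_{jr_0}-b_{j-1,s_0}\geq b_{jr_0}-\sum_{k=0}^{j-1}\rho_k(M)$, and you correctly flag the need for the \emph{suitable} minimal generating set from \ref{minmax}.
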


  \begin{proof}
    Within the proof we write $\beta_j$ for $\beta_j^R(M)$.
    Let $(\FF,\partial)$ be a graded minimal free $R$-resolution of $M$  with
    $$F_j=\Dirsum_{k=1}^{\beta_j}S(-b_{jk}) \quad\quad\quad \text{and} \quad\quad\quad \alpha= (a_{rs})_{r=1,\ldots,\beta_j \atop s=1,\ldots, \beta_{j-1}}$$
    the matrix representing the differential
    $\partial_j\:\; F_j\to F_{j-1}$ with respect to homogeneous bases of $F_{j-1}$ and $F_j$.  The assertion of the theorem will follow once we have shown that
    \[
      \sum_{k=0}^j\rho_j(M)\geq  \max\{b_{jk}\:\; k=1,\ldots,\beta_j\}.
    \]
    This inequality will be shown by induction on $j$. For $j=0$, the inequality is an equality by the definition of $\rho_0(M)$. Now assume that $j>0$.
    By a suitable choice of a basis of $F_j$ we may assume that the matrix $\alpha$ satisfies
    $$\rho_j(M)\geq \max_{r=1,\ldots,\beta_j}\{\min_{s=1,\ldots,\beta_{j-1}}\{\deg a_{rs}\}\},$$
    see \ref{minmax}.

    Observe that $\deg a_{rs}=b_{jr}-b_{j-1,s}$ if $a_{rs}\neq 0$ and $\deg a_{rs}=\infty$, otherwise. Let $r_0$ be the index with the property that
    $b_{jr_0}=\max\{b_{jr}\:\; r=0,\ldots,\beta_j\}$,   $s_0$ the index with the property that $b_{j-1,s_0}=\max\{b_{j-1,s}\:\; s=0,\ldots,\beta_{j-1}\}$ and $s_1$
    the index with $\deg a_{r_0s_1}= \min\{\deg a_{r_0s}\:\; s=1,\ldots,\beta_{j-1}\}$. Then, by using induction on $j$, we obtain
    \[
      \rho_j(M)\geq \deg a_{r_0s_1}=b_{j r_0}-b_{j-1,s_1}\geq b_{j r_0}-b_{j-1,s_0}\geq b_{j r_0}-\sum_{k=0}^{j-1}\rho_k(M),
    \]
    from which it follows that  $\sum_{k=0}^{j}\rho_k(M)\geq b_{j r_0}$, as desired.
  \end{proof}

  By \cite{Kodiyalam1} and \cite{CutkoskyHerzogTrung} it is known that for a graded ideal $I\subset K[x_1,\ldots,x_n]$  there exist integers $a$ and $b$ such that
  $\reg(I^k)= ak+b$ for all $k$. By using \ref{comparison} and \ref{growth} we obtain

  \begin{Corollary}
    \label{known}
    Let $I\subset K[x_1,\ldots,x_n]$  be a graded ideal, then there exist integers $a$ and $b$ such that
    $\reg(I^k)\leq ak+b$ for all $k$.
  \end{Corollary}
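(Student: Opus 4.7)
The plan is to combine \ref{comparison} with the ``regular ring'' consequence of \ref{growth}. Since $R = K[x_1,\ldots,x_n]$ is regular, the last sentence of \ref{growth} yields a single constant $c$, depending only on $I$, such that $\rho_j(I^k) \leq c$ for every $j \geq 1$ and every $k \geq 1$. By Hilbert's syzygy theorem, $\pd_R(I^k) \leq n$, so the maximum appearing in \ref{comparison} is taken over $j \in \{0,1,\ldots,n\}$.

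Next I would control $\rho_0(I^k) = \reg_0(I^k)$, the maximal degree of a minimal generator of $I^k$. Letting $d$ denote the maximal degree of a generator of $I$, the $k$-fold products of such generators span $I^k$, each has degree at most $dk$, and therefore $\rho_0(I^k) \leq dk$. Substituting these bounds into \ref{comparison} gives
\[
  \reg(I^k) \;\leq\; \max_{0 \leq j \leq n}\Bigl\{ \rho_0(I^k) + \sum_{i=1}^{j} \rho_i(I^k) - j \Bigr\} \;\leq\; dk + n\max(c-1,0),
\]
so one can take $a = d$ and $b = n\max(c-1,0)$.

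The only real subtlety is ensuring uniformity in $k$: the first assertion of \ref{growth}, which only guarantees $\rho_j(I^k) = c_j$ for $k \gg 0$, would leave finitely many exceptional small values of $k$ that might a priori spoil any linear bound meant to hold for \emph{all} $k$. This is precisely why the argument invokes the stronger statement valid when $R$ is regular, producing a constant $c$ independent of both $j$ and $k$. Once that uniform bound is available, the linear-in-$k$ growth of $\reg(I^k)$ comes entirely from the term $\rho_0(I^k)$, while all higher syzygies contribute only the additive constant $n(c-1)$.
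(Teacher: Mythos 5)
Your proposal is correct and is exactly the argument the paper intends (the paper itself gives no details, only the pointer ``By using \ref{comparison} and \ref{growth}''). You supply the two ingredients that make the one-liner work: the uniform bound $\rho_j(I^k)\le c$ for $j\ge 1$ and \emph{all} $k$ (the ``in particular'' clause of \ref{growth}, which is what kills the finitely many exceptional small $k$), and the elementary linear bound $\rho_0(I^k)=\reg_0(I^k)\le dk$, both fed into \ref{comparison} with $\pd_R(I^k)\le n$; this yields $a=d$, $b=n\max(c-1,0)$.
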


\section{The Golod property}
  \label{sec:golod}

  Let $(R,\mm)$ be a Noetherian ring with residue class field $K$, or a standard graded $K$-algebra with graded maximal ideal $\mm$, and let $\xb=x_1,\ldots,x_n$ a minimal (homogeneous) system of generators of
  $\mm$.  We denote by $(K(R), \partial)$ the Koszul complex of $R$ with respect to $\xb$. Let  $Z(R)$, $B(R)$ and $H(R)$ denote the module of cycles, boundaries and the homology of $K(R)$.

  Recall (see \cite[Def. 5.5 and 5.6]{AvramovKustinMiller})
  that $R$ is said to be {\em Golod}, if for each subset $\mathcal{S}$ of
  homogeneous elements of $\Dirsum_{i=1}^nH_i(R)$  there exists a function $\gamma$, which is defined on the set of finite
  sequences of elements from $\Sc$  with values in $\mm\dirsum\Dirsum_{i=1}^nK_i(R)$,  subject to the following conditions:
  \begin{enumerate}
    \item[(G1)] if $h\in \Sc$, then $\gamma(h)\in Z(R)$ and $h=[\gamma(h)]$;
    \item[(G2)] if  $h_1,\ldots,h_m$ is a sequence in $\mathcal{S}$ with $m>1$,  then
    \[
      \partial\gamma(h_1,\ldots,h_m)=\sum_{\ell=1}^{m-1}\overline{\gamma(h_1,\ldots,h_\ell)}\gamma(h_{\ell+1},\ldots,h_m),
    \]
    where $\bar{a} = (-1)^{i+1}a$ for $a\in K_i(R)$.
  \end{enumerate}

  A function $\gamma$ defined on finite sequences from $\Sc$ with the properties (G1) and (G2) is called a {\em Massey operation} on $\mathcal{S}$

  In this section we want to prove the following

  \begin{Theorem}
    \label{golod}
    Let $(R,\mm)$ be a regular local ring or the polynomial ring over $K$  with graded maximal ideal $\mm$, and $I\subset \mm$ an ideal.
    We assume that $I$ is graded if $R$ is the polynomial ring.  Then the ring $R/I^k$ is Golod for $k\gg 0$.
  \end{Theorem}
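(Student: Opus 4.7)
The plan is to construct, for $k\gg 0$, a Massey operation $\gamma$ on a homogeneous $K$-basis $\Sc$ of $\Dirsum_{i\geq 1}H_i(R/I^k)$, which is exactly what (G1) and (G2) require. By \ref{growth} there is a constant $c$, independent of $j$ and $k$, with $\rho_j(I^k)\leq c$ for every $j\geq 1$ and every $k\gg 0$; fix such a $c$ and take $k$ large compared to $c$. Since $R$ is regular, the Koszul complex $K(R)$ is the minimal free $R$-resolution of $K$, so $K(R/I^k)=K(R)\tensor_R R/I^k$ computes $\Tor_\bullet^R(R/I^k,K)=H_\bullet(R/I^k)$, and a class in $H_i(R/I^k)$ is represented by a cochain $\tilde z\in K_i(R)$ with $\partial\tilde z\in I^kK_{i-1}(R)$, well defined modulo $\partial K_{i+1}(R)$.

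The heart of the proof is a lifting lemma: every such class has a representative $\tilde z$ of high $\mm$-adic valuation, essentially of order $k-c$. The mechanism is the uniform Artin--Rees bound $\rho_j(I^k)\leq c$ from \ref{growth}, used in the spirit of \ref{star} and \ref{minmax}, to compare the Koszul complex $K(R)\tensor_R R/I^k$ with a minimal free $R$-resolution of $R/I^k$. The comparison maps between the two complexes have entries controlled in $\mm$-adic order by the Artin--Rees numbers of the syzygies of $I^k$; combined with the acyclicity of $K(R)$ in positive degrees, this allows us to adjust $\tilde z$ by a boundary from $\partial K_{i+1}(R)$ so that its $\mm$-adic valuation is at least $k-c$.

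We then build $\gamma$ by induction on the length $m$ of the argument sequence. Set $\gamma(h)$ to be the image of a deep lift as above, so (G1) is immediate and $\gamma(h)\in\mm^{k-c}K(R/I^k)$. For $m\geq 2$, assuming the values $\gamma(h_1,\ldots,h_\ell)$ for $\ell<m$ have been chosen with lifts to $K(R)$ of high $\mm$-adic valuation, the obstruction
\[
u_m\;:=\;\sum_{\ell=1}^{m-1}\overline{\gamma(h_1,\ldots,h_\ell)}\,\gamma(h_{\ell+1},\ldots,h_m)
\]
is a cycle of $K(R/I^k)$, and by the Koszul Leibniz rule its lift to $K(R)$ inherits $\mm$-adic valuation at least the sum of the valuations of the two factors in each term. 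A second application of the lifting lemma, now read as ``a cycle of $K(R/I^k)$ of sufficiently high $\mm$-adic valuation is a boundary mod $I^k$'', yields $\gamma(h_1,\ldots,h_m)\in K(R/I^k)$ with $\partial\gamma(h_1,\ldots,h_m)=u_m$, and allows this bounding cochain to again be taken in $\mm^{k-c}K(R/I^k)$, so the induction closes.

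The main obstacle is the quantitative form of the lifting lemma and its counterpart for boundaries: translating the uniform Artin--Rees bound on the syzygies of $I^k$ in its free $R$-resolution into the required control over cycles versus boundaries in $K(R/I^k)$. It is the uniformity in $k$ of the constant $c$ in \ref{growth} -- and not merely a bound that would grow with $k$, as Brodmann-type stability would only provide for ordinary invariants -- that makes the inductive cascade of Massey products close up and terminate in a genuine Massey operation, giving the Golod property of $R/I^k$ for $k\gg 0$.
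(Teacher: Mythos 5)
Your overall shape—build a Massey operation $\gamma$ by induction on sequence length and close the induction by exhibiting the obstruction as a boundary—is the same as the paper's, but the technical mechanism you propose is not the one the paper uses, and the central ``lifting lemma'' you lean on is false as stated.

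First, the paper's proof does not invoke \ref{growth} at all. The quantitative input is Lemma~\ref{finite}, a Brodmann-type stability statement: there is an $s'$ with $I^{j}\ast H_i(R/I^{s})=H_i(R/I^{s+j})$ for all $s\ge s'$, $i\ge 1$, $j\ge 0$. This comes from finite generation of the graded $R(I)$-modules $H_i(\xb;R(I))$, not from Artin--Rees numbers of the syzygies of $I^k$. There is no step in the paper that converts a bound $\rho_j(I^k)\le c$ into any control over Koszul cycles or Massey products, and I do not see how such a conversion would go: $\rho_j(I^k)$ measures the $\mm$-adic behaviour of syzygies of $I^k$ as submodules of free $R$-modules, which is a different object from $H_i(R/I^k)$.

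Second, and more seriously, the lemma you need—``a cycle of $K(R/I^k)$ of sufficiently high $\mm$-adic valuation is a boundary''—is not true. Take $R=K[\![x]\!]$, $I=(x^2)$. Then $K_1(R/I^k)$ is generated by $e_1$ with $\partial e_1=x$, $K_2=0$, and the cycle $x^{2k-1}e_1$ represents the nonzero class generating $H_1(R/I^k)\cong K$ while having $\mm$-adic valuation $2k-1\ge k-c$. There is simply no threshold, growing like $k$, above which Koszul cycles become boundaries. What actually makes the obstruction $u_m$ a boundary in the paper is not that it sits deep in $\mm$-adic filtration, but that it factors through the $\ast$-operation $I^{j}\tensor K_i(R/I^{s})\to K_i(R/I^{s+j})$: the paper writes $u_m$ as a sum $\sum a_\alpha\ast b'_\alpha$ with $a_\alpha\in I^{(r+1)(k-s_r)}$ and $b'_\alpha$ a boundary in the fixed complex $K(R/I^{s_r})$, and $\ast$ takes boundaries to boundaries. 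It is the $I$-adic (indeed $R(I)$-module) structure on Koszul homology, captured by Lemma~\ref{finite}, that does the work; the $\mm$-adic filtration from \ref{growth} does not provide the needed lifting, nor the needed criterion for bounding cochains.

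So the proposal has the right high-level architecture (Massey operation by induction on length, uniformity in $k$), but it is missing the key idea of pulling the Massey operation back to a fixed $R/I^{s_r}$ via the $\ast$-operation and Lemma~\ref{finite}, and it substitutes for it a $\mm$-adic lifting principle that fails. As written the argument does not close.
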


  Given integers $i,s\geq 1$  and an integer $j\geq 0$ there is a natural $R$-module homomorphism
  $$
   \ast \: \left\{ \begin{array}{ccc}  I^j\tensor K_i(R/I^s) & \to & K_i(R/I^{s+j}) \\
                                                   a \tensor c & \mapsto & a \ast v
                   \end{array}
           \right.
  $$
  which is defined as follows: observing that $K_i(R/I^t)=K_i(R)/I^tK_i(R)$ for any integer $t\geq 1$, we let $K_i(R)\to K_i(R/I^t)$ be the
  canonical epimorphism which assigns to $u\in K_i(R)$ the residue class $u+I^tK_i(R)$. Now if  $a\in I^j$ and $v=u+I^sK_i(R)\in K_i(R/I^s)$. Then we
  set  $a\ast v :=au+I^{s+j}K_i(R)$. Obviously this map is well defined, that is, independent of the choice of $u$.
  By a simple calculation it follows that for $a\in I^j$, $b \in I^\ell$ and $v \in K_i(R/I^s)$
  \begin{eqnarray}
     \label{eq:ast}
     (ab)\ast v & = & a \ast ( b \ast v) \in K_i(R/I^{s+j+\ell})
  \end{eqnarray}
  Notice that if $v\in Z_i(R/I^s)$, then $a\ast v \in Z_i(R/I^{s+j})$. Indeed,  we have $v=u+I^sK_i(R)\in Z_i(R/I^s)$ if and only if  $\partial(u)\subset I^sK_{i-1}(R)$.
  In that case $\partial(au)\in I^{s+j}K_{i-1}(R)$,  and hence $\partial(a\ast v) =\partial(au)+I^{j+s}K_i(R) = 0$. Similarly one shows that
  $a\ast v \in B_i(R/I^{s+j})$, if $v\in B_i(R/I^s)$. Thus $\ast$ induces a map
  \[
    I^j\tensor H_i(R/I^s)\to H_i(R/I^{s+j}),\quad a\tensor [z] \mapsto a \ast [z] := [a \ast z].
  \]
  We will denote the image of the map $\ast : I^j\tensor H_i(R/I^s)\to H_i(R/I^{s+j})$ by $I^j \ast  H_i(R/I^s)\subset  H_i(R/I^{s+j})$.

  \medskip
  For the proof of the theorem  we shall need the following

  \begin{Lemma}
    \label{finite}
    There exists an integer $s'$ such that for $s \geq s'$
    $$I^j \ast H_i(R/I^s)=H_i(R/I^{s+j})$$
    for all integers $i\geq 1$ and $j\geq 0$.
  \end{Lemma}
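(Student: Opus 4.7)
The plan is a direct cycle-level chase on the Koszul complex, combining the Artin--Rees lemma with the acyclicity of $K_\bullet(R)$ in positive degrees (which holds because $\xb$ is a regular sequence, $R$ being regular). Concretely, writing $Z_i^s:=\partial^{-1}(I^sK_{i-1}(R))\subseteq K_i(R)$, the identification $K_\bullet(R/I^s)=K_\bullet(R)/I^sK_\bullet(R)$ gives
\[
  H_i(R/I^s)\;=\;Z_i^s/\bigl(B_i(R)+I^sK_i(R)\bigr),
\]
and $a\ast[u]=[au]$ for $a\in I^j$ and $u\in Z_i^s$; indeed $au\in Z_i^{s+j}$ because $\partial(au)=a\partial(u)\in I^{s+j}K_{i-1}(R)$.

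Next I would apply the Artin--Rees lemma to the pair $B_{i-1}(R)\subseteq K_{i-1}(R)$ with respect to $I$: for each $i\geq 1$ there is an integer $r_i$ with
\[
  B_{i-1}(R)\cap I^sK_{i-1}(R)\;=\;I^{s-r_i}\bigl(B_{i-1}(R)\cap I^{r_i}K_{i-1}(R)\bigr)\quad\text{for all } s\geq r_i.
\]
Since $K_i(R)=0$ for $i>n:=\embdim R$, only finitely many $i$ are relevant, and I set $s':=\max_{1\leq i\leq n}r_i$.

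The crux is the following chase. Fix $s\geq s'$, $j\geq 0$, $i\geq 1$ and a cycle $u\in Z_i^{s+j}$. Since $\partial(u)\in B_{i-1}(R)\cap I^{s+j}K_{i-1}(R)$, Artin--Rees produces $a_k\in I^{s+j-r_i}$ and $v_k\in Z_i^{r_i}$ with $\partial(u)=\sum_k a_k\partial(v_k)$; hence $u-\sum_k a_kv_k\in\Ker\partial_i=Z_i(R)=B_i(R)$, where the last equality uses the acyclicity of the Koszul complex of a regular sequence in positive degrees. Thus $[u]=\sum_k a_k\ast[v_k]$ in $H_i(R/I^{s+j})$. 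Because $s\geq r_i$, I can factor $a_k\in I^{s+j-r_i}=I^j\cdot I^{s-r_i}$ and invoke \eqref{eq:ast} to rewrite each term as an element of $I^j\ast(I^{s-r_i}\ast H_i(R/I^{r_i}))\subseteq I^j\ast H_i(R/I^s)$, yielding the nontrivial inclusion $H_i(R/I^{s+j})\subseteq I^j\ast H_i(R/I^s)$; the reverse inclusion is by construction.

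The only real subtlety I anticipate is arranging a single $s'$ that is uniform in both $i$ and $j$: uniformity in $i$ is automatic once one notes the finite range $1\leq i\leq n$, while uniformity in $j$ is precisely what the choice $s\geq r_i$ buys, since it guarantees the factorization $I^{s+j-r_i}=I^j\cdot I^{s-r_i}$ for every $j\geq 0$.
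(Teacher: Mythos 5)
Your proof is correct, but it takes a genuinely different route from the paper's. The paper reduces to showing that $I^j\tensor H_{i-1}(I^s)\to H_{i-1}(I^{s+j})$ is surjective, via $H_i(R/I^k)\iso H_{i-1}(I^k)$, and then invokes the finiteness of the graded $R(I)$-module $H_i(\xb;R(I))$, whose degree-$k$ piece is $H_i(I^k)$: a finitely generated graded module over a standard graded Noetherian ring is generated in bounded degree, which gives the stabilization uniformly in $j$ and, taking a maximum over $0\leq i\leq n-1$, uniformly in $i$. You instead work directly inside $K_\bullet(R)$: you apply the (general, $I$-adic) Artin--Rees lemma to $B_{i-1}(R)\subseteq K_{i-1}(R)$ and then use acyclicity $Z_i(R)=B_i(R)$ for $i\geq 1$ (here you use $R$ regular explicitly; the paper uses it implicitly through $H_i(R/I^k)\iso H_{i-1}(I^k)$) to carry out a cycle chase, factoring coefficients via $I^{s+j-r_i}=I^j\cdot I^{s-r_i}$ and invoking \eqref{eq:ast}. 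Both proofs ultimately exploit the same Noetherian finiteness phenomenon, but yours replaces the Rees-algebra argument with an elementary and fully explicit chase; the paper's version has the mild advantage of simultaneously exhibiting the $R(I)$-module structure on $\Dirsum_k H_i(I^k)$, which matches the $\ast$-action used throughout the rest of the section, while yours is arguably more self-contained.
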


  \begin{proof}
    Observe that $H_i(R/I^k)\iso H_{i-1}(I^k)$. By using this isomorphism it follows that $I^j\tensor H_i(R/I^s) \to H_i(R/I^{s+j})$ is surjective if and only
    if the map
    \begin{eqnarray}
      \label{eq:map}
      I^j\tensor H_{i-1}(I^s) & \to & H_{i-1}(I^{s+j})
    \end{eqnarray}
    with $a\tensor [z]\mapsto [az]$ is surjective. Thus it amounts to show that there exists an integer $s$ such that
    $I^j\tensor H_{i}(I^s)\to H_{i}(I^{s+j})$ is surjective for all $i=0,\ldots,n-1$ and all $j\geq 0$.

    To this end we consider the Koszul complex $K(\xb;R(I))$ of the sequence $\xb$ with values in the Rees ring $R(I)$ of $I$. Recall that
    $H_i(\xb;R(I))$ is a finitely generated graded $R(I)$-module with graded pieces
    \begin{eqnarray}
      \label{eq:griso}
      H_i(\xb;R(I))_k & = & H_i(I^k) , \quad\text{for all} \quad k,
    \end{eqnarray}
    where the graded $R(I)$-module structure is given by \eqref{eq:map}.
    Thus for each $i$ there exists an integer $s_i$ such that
    \[
      H_i(I^{s_i}) = I^{k-s_i}H_i(\xb;R(I))_{s_i} = H_i(\xb;R(I)_{k} = H_i(I^k)
    \]
    for all $k\geq s_i$.

    Hence
    the number $s'=\max\{s_0,\ldots,s_{n-1}\}$ satisfies the condition of the lemma.
  \end{proof}

  \begin{proof}[Proof of \ref{golod}]
    In the case that $R$ is the polynomial ring, we denote by $\hat{R}$ the $\mm$-adic completion of $R$.
    Since $H(\hat{R}/I\hat{R})=H(R/I)$, we may replace $R$ by its completion and may therefore assume  that $R$ is local.

    We claim the following: for any integer  $r\geq 1$, there exists an integer $s_r$ such that for all $k\geq s_r$ and each
    homogeneous subset $\Sc\subset \Dirsum_{i=1}^nH_i(R/I^k)$ there exists a function $\gamma$,
    defined on  the set of sequences of elements of $\Sc$ of length $\leq r$, such that (G1) and (G2) hold.

    The claim will yield the desired result, since  for any such function we have that
    $\gamma(h_1,\ldots,h_r)\in K_a(R/I^k)$ where $a\geq 2r-1$. Hence  if $r\geq n/2+1$, we necessarily have $\gamma(h_1,\ldots,h_r)=0$.

    We will prove the claim by induction on $r$. For $r=1$, we may choose $s_r=1$ and the assertion is trivial.

    Now let $r\geq 1$ and assume that the claim is proved for all integers $\leq r$. By \ref{finite}
    there exists an integer $s$ such that $I^{k-s}H_i(R/I^s)=H_i(R/I^{k})$
    for all $k\geq s$ and all $i>0$. We set $s_{r+1}=\max\{s_r,s\}+1$. Let  $\mathcal{G}=\{g_1,\ldots,g_t\}$ be a homogeneous $K$-basis of
    $\Dirsum_{i=1}^nH_i(R/I^{s_r})$. By induction hypothesis there exists a function $\gamma$,
    defined on  the set of sequences of elements of $\mathcal{G}$ of length $\leq r$, such that (G1) and (G2) hold.

    Let $k\geq s_{r+1}$ and let $\Sc\subset \Dirsum_{i=1}^nH_i(R/I^k)$ be any  set of homogeneous elements.
    We are going  to define a function $\gamma$ on sequences from $\Sc$ of length $r+1$ satisfying (G1) and (G2).

    First, let $h_1,\ldots, h_m$ be any sequence of elements $h_i\in \Sc$ of length $m\leq r$. Since $s_{r+1}>s$, each $h_i$ can be written as
    $h_i=\sum_{j=1}^t a_{ij}g_j$ with $a_{ij} \in I^{k-s_r}$ and all $g_j\in H_t(R/I^{s_r})$ if $h_i\in H_t(R/I^k)$.
    We define $\gamma$ on this sequence by multi-linear extension, that is, we set
    \[
      \gamma(h_1,\ldots,h_m) := \sum_{j_1=1}^t\sum_{j_2=1}^t\cdots \sum_{j_m=1}^t
                                        (a_{1j_1}a_{2j_2}\cdots a_{mj_m}) \ast \gamma(g_{j_1},\ldots,g_{j_m}).
    \]
    Here we consider $a_{1j_1}a_{2j_2}\cdots a_{mj_m}$ as an element of $I^{k-s_r}$ so that $\gamma(h_1,\ldots,h_m)\in K(R/I^k)$.

    Next we verify that the function $\gamma$ satisfies (G1) and (G2) on sequences from $\Sc$ of length $\leq r$:

    \begin{itemize}
    \item[(G1)]
      Let $h\in\Sc$ with presentation $h=\sum_{j=1}^t a_{j}g_j$ where  $a_j\in I^{k-s_r}$ for all $j$.
      Since $\gamma(g_j)\in Z(R/I^{s_r})$ it follows that $a_{j} \ast \gamma(g_j)\in Z(R/I^{k})$, and hence we see
      that $\gamma(h)=\sum_{j=1}^t a_{j} \ast \gamma(g_j)$ belongs to $Z(R/I^{k})$ as well. Moreover, we have
      \[
        [\gamma(h)]=\sum_{j=1}^t [a_{j} \ast \gamma(g_j)]= \sum_{j=1}^t a_{j} \ast [\gamma(g_j)] = \sum_{j=1}^t a_{j}g_j=h.
      \]

    \item[(G2)]
      Since $\partial$ is linear with respect to $\ast$ we have
      \[
        \partial \gamma(h_1,\ldots,h_m)= \sum (a_{1j_1}a_{2j_2}\cdots a_{mj_m}) \ast \partial \gamma(g_{j_1},\ldots,g_{j_m}),
      \]
      where the sum is taken  over all $j_k$  ranging  between 1 and $t$. For each of the summands in $\partial \gamma(h_1,\ldots,h_m)$, by using
      $a \ast (v+w) = a\ast v + a \ast w$ and $(ab) \ast (vw) = (a \ast v) (b \ast w)$, we have
      \begin{eqnarray*}
        & & (a_{1j_1}a_{2j_2}\cdots a_{mj_m}) \ast \partial \gamma(g_{j_1},\ldots,g_{j_m})\\
        &=& (a_{1j_1}a_{2j_2}\cdots a_{mj_m}) \ast  \sum_{\ell=1}^{m-1} \overline{\gamma(g_{j_1},\ldots,g_{j_\ell})} \gamma(g_{j_{\ell+1}},\ldots,g_{j_m})\\
        &=& \sum_{\ell=1}^{m-1} (a_{1j_1}a_{2j_2}\cdots a_{mj_m}) \ast \Big( \overline{\gamma(g_{j_1},\ldots,g_{j_\ell})} \gamma(g_{j_{\ell+1}},\ldots,g_{j_m})\Big)\\
        &=& \sum_{\ell=1}^{m-1}\Big((a_{1j_1}a_{2j_2}\cdots a_{\ell j_\ell}) \ast \overline{\gamma(g_{j_1},\ldots,g_{j_\ell})}\Big) \Big((a_{\ell+1j_{\ell+1}}\cdots a_{mj_m}) \ast \gamma(g_{j_{\ell+1}},\ldots,g_{j_m})\Big).
      \end{eqnarray*}
      Summing over all indices $j_t$ independently yields the desired identity (G2).
    \end{itemize}

    In order to define $\gamma$ on sequences  of elements of $\Sc$ of length $r+1$, it suffices to show that for any sequence $h_1,\ldots,h_{r+1}$ of elements from $\Sc$,
    the element
    \[
      b = \sum_{\ell=1}^{r}\overline{\gamma(h_1,\ldots,h_\ell)}\gamma(h_{\ell+1},\ldots,h_{r+1}),
    \]
    is a boundary of $K(R/I^k)$. To this end, observe that $b$ is a linear combination of expressions of the form
    \[
      b'=\sum_{\ell=1}^{r}\overline{\gamma(g_{k_1},\ldots,g_{k_\ell})}\gamma(g_{k_{\ell+1}},\ldots,g_{k_{r+1}})
    \]
    with coefficients in $I^{(r+1)(k-s_r)}\subset I^{k-s_r}$. Let $a$ be the coefficient of $b'$. Since $b'$ is a
    boundary in $K(R/I^{s_r})$ and $a\in I^{k-s_r}$,  it follows that $ab'$  is a boundary in $K(R/I^{k})$. Thus $b\in B(R/I^k)$, as desired.
  \end{proof}

  In the following we consider for a Noetherian local ring $(R,\mm)$ with residue field $K = R/\mm$
  or a standard graded $K$-algebra $R$ with graded maximal ideal $\mm$
  its deviations $\epsilon_i(R)$ and the Betti numbers $\beta_i^R(K)$ of the minimal free resolution of
  $K=R/\mm$ over $R$.  We refer the reader to \cite{Avramov} and \cite{GulliksenLevin} for standard facts about these invariants.
  The characterization of the Golod property in terms of Poincar\'e-Betti series (see \cite[Cor. 4.2.4]{GulliksenLevin}
  or \cite[(5.0.1)]{Avramov} and \ref{golod}) immediately yield the following result.

  \begin{Corollary}
    \label{poincaregolod}
    Let $(R,\mm)$ be a regular local ring with $K = R/\mm$ or the polynomial ring over $K$ with graded maximal ideal $\mm$, and $I\subset \mm$ an ideal.
    We assume that $R$ is of dimension $d$ and that $I$ is graded if $R$ is the polynomial ring. Then for $k \gg 0$
    the multiplication on $Tor_*^R(R/I^k,K)$ is trivial and
    \begin{eqnarray*}
      \label{eq:golod}
      \sum_{i \geq 0} \beta_i^{R/I^k}(K) z^i & = & \frac{(1+z)^d}{1-z\sum_{i=1}^d \beta^R_i(R/I^k) z^i}
    \end{eqnarray*}
  \end{Corollary}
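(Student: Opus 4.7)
This corollary is a direct translation of \ref{golod} using the classical characterizations of Golodness in terms of Massey operations and in terms of the Poincar\'e--Betti series, so the argument is more referential than computational. The first step is to apply \ref{golod} to obtain an integer $k_0$ such that $R/I^k$ is Golod for every $k\geq k_0$; from there on everything is a ring-theoretic consequence of Golodness, with no further dependence on $I$ or $k$.

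For the triviality of the multiplication on $\Tor_*^R(R/I^k,K)$ I would use the Koszul model: since $R$ is regular (local or polynomial) and $\xb=x_1,\ldots,x_n$ is a minimal (homogeneous) system of generators of $\mm$, the Koszul complex $K(R)$ is a minimal free $R$-resolution of $K$, and hence $K(R/I^k)=K(R)\otimes_R R/I^k$ computes $\Tor_*^R(R/I^k,K)$ as a graded $K$-algebra, with product induced from the DG-algebra structure on $K(R/I^k)$. Condition (G2) with $m=2$ reads $\partial \gamma(h_1,h_2)=\overline{\gamma(h_1)}\,\gamma(h_2)$, which, together with (G1), expresses that the product $h_1h_2\in H(R/I^k)$ is represented by a boundary and therefore vanishes. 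Applied to pairs of elements in a homogeneous $K$-basis of $\Dirsum_{i\geq 1} H_i(R/I^k)$, this gives triviality of the multiplication on the positively graded part of $\Tor_*^R(R/I^k,K)$.

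For the displayed identity, I would invoke the classical Golod formula \cite[Cor.~4.2.4]{GulliksenLevin} (see also \cite[(5.0.1)]{Avramov}), which for a Golod surjection $R\to A$ reads
\[
P_A^K(z)\;=\;\frac{P_R^K(z)}{1-z\bigl(P_R^A(z)-1\bigr)},\qquad\text{where } P_S^M(z):=\sum_{i\geq 0}\beta_i^S(M)\,z^i.
\]
Setting $A=R/I^k$ and using that $R$ is regular of Krull dimension $d$, so $P_R^K(z)=(1+z)^d$, and that $\beta_0^R(R/I^k)=1$ with $\projdim_R R/I^k\leq d$, so $P_R^{R/I^k}(z)-1=\sum_{i=1}^d \beta_i^R(R/I^k)\,z^i$, substitution yields the formula asserted in the corollary. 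The only point requiring care is that for $k\geq 2$ one has $I^k\subset\mm^2$, which forces $\embdim(R/I^k)=d$ and thereby ensures that the embedding dimension appearing implicitly in $P_R^K(z)$ really is $d$. Beyond this bookkeeping no substantive obstacle arises; the whole content of the corollary lies in \ref{golod}.
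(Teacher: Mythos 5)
Your argument is correct and follows the same route as the paper: apply \ref{golod} to get Golodness for $k\gg 0$, then invoke the Gulliksen--Levin/Avramov characterization of Golod rings via the Poincar\'e--Betti series, specializing $P_R^K(z)=(1+z)^d$ and $\projdim_R R/I^k\leq d$ for the regular ring $R$. You merely make explicit what the paper leaves as "immediately yields," including the observation that triviality of the Koszul-homology product is the $m=2$ case of (G2) combined with (G1).
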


  The next result will exploit this Corollary in order to obtain specific information about the growth of
  $\beta_i^{R/I^k}(K)$ and $\epsilon_i(R/I^k)$ as a function of $k$.
  For the formulation of the result we denote for an ideal $I$ by $\ell(I)$ its analytic spread.

  \begin{Proposition}
    \label{lem:bettipol}
    Let $I$ be an ideal in the regular local ring $R$ of dimension $d$ or $I$ a graded ideal in
    $R = K[x_1,\ldots, x_d]$. If $\ell(I) \geq 2$ then for $i \geq 0$
    \begin{itemize}
      \item[(i)] $\beta^{R/I^k}_i(K)$ is
         a polynomial of degree $(\ell(I)-1 ) \cdot \lfloor \frac{i}{2}\rfloor$ in $k$ for $k \gg 0$.
      \item[(ii)] $\epsilon_i(R/I^k)$ is
         a polynomial of degree $(\ell(I)-1) \cdot \lfloor \frac{i+1}{2}\rfloor$ in $k$ for $k \gg 0$.
    \end{itemize}
  \end{Proposition}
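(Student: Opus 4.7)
The plan is to deduce both parts from the Golod Poincaré series formula provided by \ref{poincaregolod} together with the classical theorem of Kodiyalam \cite{Kodiyalam1}, which states that for each fixed $i \geq 0$ the total Betti number $\beta_i^R(R/I^k)$ agrees, for $k \gg 0$, with a polynomial in $k$ of degree $\ell(I)-1$. The hypothesis $\ell(I) \geq 2$ will be used precisely to guarantee that this polynomial has positive degree, so that the leading terms in the generating-function computations below cannot vanish.

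For (i), I set $B_k(z) := \sum_{i=1}^{d} \beta_i^R(R/I^k)\,z^i$ and, using \ref{poincaregolod}, expand
\[
  P_k(z) := \sum_{i \geq 0} \beta_i^{R/I^k}(K)\,z^i \;=\; (1+z)^d \sum_{n \geq 0} z^n B_k(z)^n.
\]
Taking the coefficient of $z^i$ produces an explicit sum in which each summand has the form $\binom{d}{a}\,[z^{b-n}]\,B_k(z)^n$ with $a+b=i$ and $b \geq 2n$; since $[z^{b-n}]\,B_k(z)^n$ is a polynomial of total degree $n$ in the $\beta_j^R(R/I^k)$'s, Kodiyalam's theorem bounds its $k$-degree by $n(\ell(I)-1)$. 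The maximal admissible $n$ is $\lfloor i/2\rfloor$, which gives the upper bound on the $k$-degree of $\beta_i^{R/I^k}(K)$ asserted in (i). For the matching lower bound I would isolate the leading contribution at $n=\lfloor i/2\rfloor$, which is, up to a positive combinatorial factor, $\bigl(\beta_1^R(R/I^k)\bigr)^{\lfloor i/2\rfloor}$, and observe that its $k$-leading coefficient is strictly positive and cannot be cancelled by terms coming from smaller $n$, which have strictly smaller $k$-degree.

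For (ii) the analogous approach is to pass to logarithms. On the one hand,
\[
  \log P_k(z) \;=\; d\,\log(1+z) + \sum_{n \geq 1} \frac{z^n B_k(z)^n}{n},
\]
and the degree analysis used in (i) bounds the $k$-degree of $[z^i]\log P_k(z)$. On the other hand, the product expansion of the Poincaré series in terms of deviations yields
\[
  [z^i] \log P_k(z) \;=\; \epsilon_i(R/I^k) \;+\; \sum_{\substack{j\mid i\\ j<i}} c_{i,j}\,\epsilon_j(R/I^k)
\]
for explicit rational constants $c_{i,j}$. A Möbius-type inversion solves this recursively for $\epsilon_i(R/I^k)$ in terms of $\epsilon_j(R/I^k)$, $j<i$, and the coefficients of $\log P_k(z)$; combining with the degree analysis and inducting on $i$ produces the claimed polynomiality and $k$-degree.

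The main obstacle lies in the final step of (ii): after the Möbius-type inversion one must rule out cancellations that could drop the $k$-degree below the asserted value. This requires tracking leading coefficients (not merely degrees) throughout the induction, using the fact that the leading coefficient of $\beta_1^R(R/I^k)=\mu(I^k)$ is a positive multiple of the multiplicity of the fiber cone of $I$, so that the dominant contribution to $[z^i]\log P_k(z)$ — namely the single summand with $n$ as large as possible — survives when compared with the correction terms coming from smaller divisors of $i$.
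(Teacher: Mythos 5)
Your proposal takes essentially the same approach as the paper: both unpack the Golod Poincar\'e series from \ref{poincaregolod} (you via geometric-series expansion, the paper via the equivalent coefficient recursion obtained by clearing the denominator), combine this with Kodiyalam's polynomiality of $\beta_j^R(R/I^k)$, and take logarithms for part (ii) to match coefficients against the deviation product formula. One small caveat: the exact degree $\deg_k\beta_1^R(R/I^k)=\ell(I)-1$ and the bound $\deg_k\beta_j^R(R/I^k)\le\ell(I)-1$ for $j\ge 2$ come from Herzog--Welker \cite[Prop.~2.2]{HerzogWelker}, not from Kodiyalam \cite{Kodiyalam1}, which only gives polynomiality.
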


  \begin{proof}[Proof of \ref{lem:bettipol} (i)]
    By \ref{golod} the ring $R/I^k$ is Golod for $k \gg 0$. Hence
    by \ref{poincaregolod} the Equation \eqref{eq:golod} is valid for $k \gg 0$.
    Multiplying with the denominator of the right hand side of \eqref{eq:golod}
    we obtain for $2 \leq i$ :
    \begin{eqnarray}
      \label{eq:infinite}
      \beta_i^{R/I^k} (K) & = & \sum_{l = 1}^{\min\{i-1,d\}} \beta_{i-1-l}^{R/I^k}(K) \beta^R_{l}(R/I^k) + {d \choose i}
    \end{eqnarray}
    Now by \cite[Cor. 7]{Kodiyalam1} each $\beta^R_i(R/I^k)$ is a polynomial in $k$ for $k \gg 0$.
    We assume that $k$ is large enough to satisfy the two preceding conclusions.
    We know that $\beta_0^{R/I^k} (K) = 1$ and $\beta_1^{R/I^k}(K) = d = \dim R$. In addition
    we know by \cite[Prop. 2.2]{HerzogWelker} that $\deg \beta_1^R(R/I^k) = \ell(I) -1 \geq \cdots \geq \deg \beta_n^{R}(R/I^k)$.
    Hence we deduce from \eqref{eq:infinite} and induction on $i$ that
    $\beta^R_i(K)$ is a polynomial of degree $(\ell(I)-1) \cdot \lfloor \frac{i}{2}\rfloor$ in $k$.
  \end{proof}

  Before we can proceed to the proof of \ref{lem:bettipol} (ii) we need some simple calculations
  relating $\epsilon_i(R/I)$ and $\beta_i^{R/I}(K)$ for general regular local rings $(R,\mm)$
  or polynomial rings $R$ and (graded) ideals $I$.
  Recall that for $i \geq 0$ the deviations $\epsilon_i(R/I)$ can be defined through

  \begin{eqnarray}
    \label{eq:defining1}
    \label{eq:defining2}
       \prod_{i \geq 0} \frac{(1+z^{2i+1})^{\epsilon_i(R/I)}}{(1-z^{2i+2})^{\epsilon_{i+1}(R/I)}} & = & \prod_{i\geq 0} (1 + (-1)^i z^{i+1})^{(-1)^i \cdot \epsilon_{i}(R/I)}  =
       \sum_{i \geq 0} \beta_i^{R/I}(K) z^i
  \end{eqnarray}

  We now assume that $k \gg 0$ is large enough so that Equation \eqref{eq:golod} from \ref{poincaregolod} is valid
  for $R/I^k$. For sake of simple notation we will for a moment abbreviate $\epsilon_i(R/I^k)$ by $\epsilon_i$ and $\beta_i^{R}(R/I^k)$ by $\beta_i$.

  Taking the logarithm on the left hand side of \eqref{eq:defining1} we obtain:

  \begin{eqnarray}
    \sum_{i\geq 0} (-1)^i \epsilon_i \log (1+(-1)^i z^{i+1}) & = &
                  \sum_{i\geq 0} (-1)^i \epsilon_i \sum_{n\geq 1} \frac{(-1)^{n-1}}{n} (-1)^{ni} z^{n(i+1)} \label{eq:logdev} \\
                                                       & = &
                  \sum_{m\geq 1} \Big( \sum_{n|m} (-1)^{m/n} \frac{\epsilon_{m/n-1}}{n} \Big) (-z)^{m}
                  \nonumber 
  \end{eqnarray}

  Taking logarithm on the right hand side of \eqref{eq:golod}
  we obtain:
  \begin{eqnarray}
    \label{eq:logbetti}
    d \log (1 + z)- \log(1-\sum_{i=1}^d \beta_i z^{i+1}) & = &
                               \sum_{i\geq 1} \Big( \frac{(-1)^{i-1}d}{i} z^i + \frac{1}{i} \big(\sum_{j=1}^d \beta_j z^{j+1}\big)^i\Big)
  \end{eqnarray}

  By comparing coefficients of $z^{i+1}i$ from \eqref{eq:logdev} and \eqref{eq:logbetti} we obtain for $i \geq 0$
  \begin{eqnarray}
    \label{eq:deviation}
    (-1)^{i+1} \Big( \sum_{n|i+1} (-1)^{\frac{i+1}{n}} \frac{\epsilon_{(i+1)/n-1}}{n} \Big) & = &
                                                  \frac{(-1)^{i+1} d}{i+1} + \sum_{{j_1 + \cdots + j_n+n = i+1} \atop {1 \leq j_1,\ldots, j_n \leq d}}
                                                     \frac{1}{n} \beta_{j_1} \cdots \beta_{j_n}
  \end{eqnarray}

  \smallskip

  \begin{proof}[Proof of \ref{lem:bettipol} (ii)]
    As in the proof of \ref{lem:bettipol} (i) we assume $k$ is large enough so that $R/I^k$ is Golod and each
    $\beta^R_j(R/I^k)$ is a polynomial in $k$.
    Again we proceed by induction. For $i = 0,1$, we have $\epsilon_0(R/I^k) = d$ and
    $\epsilon_1(R/I^k) = \beta_1^R(R/I^k)$. The claim holds since $\beta_1^R(R/I^k)$
    is a polynomial of degree $\ell(I) -1$.

    Now let $i \geq 2$.
    By \eqref{eq:deviation} we can express $\epsilon_{i}(R/I^k)$ as a linear
    combination of $\epsilon_j (R/I^k)$ for $j < \lfloor \frac{i+1}{2} \rfloor -1$ and products of
    $\beta_{j_1}^{R}(R/I^k) \cdots \beta_{j_n}^{R} (R/I^k)$ for
    $1 \leq j_1,\ldots, j_n \leq d$ and $j_1+\cdots + j_n +n = i+1$. We first show that this
    second summand has the right degree as a polynomial in $k$.

    \noindent{\sf Claim:} The right hand side of \eqref{eq:deviation}
    is a polynomial of degree $(\ell(I)-1)\lfloor \frac{i+1}{2}\rfloor$ in $k$.

    $\triangleleft$
    For $1  \leq m \leq n$ the Betti number
    $\beta_{j_m}^R(R/I^k)$ is a polynomial of degree $\leq \ell(I) -1$ in $k$ by
    \cite[Prop. 2.2]{HerzogWelker}. It follows that $\beta_{j_1}^{R}(R/I^k) \cdots \beta_{j_n}^{R} (R/I^k)$ is a
    polynomial of degree $\leq (\ell(I)-1) \cdot n$ in $k$.
    Since $1 \leq j_1,\ldots, j_n$ the maximal degree is achieved for $n = \lfloor \frac{i}{2} \rfloor$.
    For $i$ even we can choose $j_1 = \cdots = j_n = 1$ and for $i$ odd we can
    choose $j_1 = \cdots = j_{n-1} = 1$ and $j_{n} = 2$. Since $\beta_1^R(R/I^k)$ is of
    degree $\ell(I)-1$ in $k$ it follows from $\ell(I) \geq 2$ and \cite[Prop 2.2, Rem. 2.5]{HerzogWelker} that
    $\beta_2^R(R/I^k)$ is also of degree $\ell(i)-1$ in $k$. Hence the asserted bound is achieved
    and the claim follows since the sum on the right hand side of \eqref{eq:deviation}
    runs over a positive linear combination of products of polynomials with positive leading coefficients.
    $\triangleright$

    By induction hypothesis for $j \leq \lfloor \frac{i+1}{2} \rfloor -1$ we have that $\epsilon_j(R/I^k)$ is a polynomial
    of degree
    $$(\ell(I)-1) \lfloor \frac{j+1}{2} \rfloor \leq (\ell(I)-1) \lfloor \frac{i/2}{2} \rfloor <  (\ell(I)-1) \lfloor \frac{i+1}{2} \rfloor.$$
    Hence there is no contribution in degree $(\ell(I)-1) \lfloor \frac{i+1}{2} \rfloor$
    from the $\epsilon_{j}(R/I^k)$ for $j < i$ on the right hand side of \eqref{eq:deviation}.
    Thus the assertion follows.
  \end{proof}

\section*{Acknowledgment}
  We thank Aldo Conca for suggesting the study of the asymptotic growth of deviations.
  Part of this work was carried out while the third author was visiting
  the Department of Mathematics of Universit\"at Duisburg-Essen. He is grateful for 
  its hospitality.

\end{document}